\documentclass{article}
\usepackage{arxiv}
\usepackage{amsmath}
\usepackage{hyperref}
\usepackage{amssymb}
\usepackage{amsthm}
\usepackage{graphicx}
\theoremstyle{plain}

\newtheorem{theorem}{Theorem}[section]
\newtheorem{proposition}{Proposition}[section]
\newtheorem{lemma}{Lemma}[section]

\theoremstyle{definition}
\newtheorem{definition}{Definition}[section]

\begin{document}
	
	\title{Arithmetic and Asymptotic Properties of Restricted Totient Sums}
	
	\author{
		{ \sc Es-said En-naoui } \\ 
		University Sultan Moulay Slimane\\ Morocco\\
		essaidennaoui1@gmail.com\\
		\\
	}
	
	\maketitle

\begin{abstract}
	This article extends our previous study \cite{Ennaoui2020} on the summatory behavior of Euler’s totient function $\varphi(n)$. We investigate two complementary restricted sums,
	\[
	\Upsilon(x,p)=\sum_{\substack{k\leq x \\ \gcd(k,p)=1}}\varphi(k), \qquad 
	\Delta(x,p)=\sum_{\substack{k\leq x \\ p \mid k}}\varphi(k),
	\]
	which satisfy the decomposition $\Psi(x)=\Upsilon(x,p)+\Delta(x,p)$, where $\Psi(x)=\sum_{k\leq x}\varphi(k)$.  
	
	We establish recurrence formulas, congruence relations, and generating function identities for $\Delta(x,p)$. In particular, we prove that $\Delta(x,p)\equiv 0 \pmod{p-1}$ for every prime $p$, and derive the asymptotic expansion
	\[
	\Delta(x,p)=\frac{3}{\pi^{2}(p+1)}\,x^{2}+O(x\log x).
	\]
	
	Furthermore, we study average orders, connections with $\omega(n)$, and relations with divisor structures. These results refine the analytic understanding of totients in arithmetic progressions and complement the asymptotic theory of $\Psi(x)$.
\end{abstract}
\section{Introduction}

Euler’s totient function $\varphi(n)$ is one of the central objects in elementary number theory \cite{HardyWright}. Defined by
\[
\varphi(n) = \#\{\,1 \leq k \leq n : \gcd(k,n)=1\,\},
\]
it measures the number of integers up to $n$ that are coprime to $n$. The multiplicative nature of $\varphi$, together with its deep connections to prime factorization, has made it a cornerstone in arithmetic and analytic investigations.  

A classical object of study is the summatory function
\[
\Psi(x) = \sum_{n\leq x}\varphi(n),
\]
whose asymptotic behavior,
\[
\Psi(x) = \frac{3}{\pi^{2}}x^{2} + O(x\log x),
\]
was established in the early 20th century and plays a crucial role in understanding the distribution of totients.

In this article we extend this framework by introducing two complementary restricted sums associated with a fixed prime $p$:
\[
\Upsilon(x,p) = \sum_{\substack{n\leq x \\ \gcd(n,p)=1}} \varphi(n), 
\qquad 
\Delta(x,p) = \sum_{\substack{n\leq x \\ p \mid n}} \varphi(n).
\]
These satisfy the natural decomposition
\[
\Psi(x) = \Upsilon(x,p) + \Delta(x,p),
\]
which allows us to filter the contribution of $\varphi(n)$ according to divisibility by a prime.  

The main goal of this paper is to investigate arithmetic and analytic properties of $\Delta(x,p)$, including congruence relations, recurrence identities, generating functions, and asymptotic behavior. In particular, we prove that $\Delta(x,p)\equiv 0 \pmod{p-1}$ for all primes $p$, and establish the asymptotic formula :
\[
\Delta(x,p) = \frac{3}{\pi^{2}(p+1)}\,x^{2} + O(x\log x).
\]

These results provide a refined understanding of how Euler’s totient values are distributed among multiples of a fixed prime, and they complement the classical theory of $\Psi(x)$. The methods combine elementary number-theoretic arguments with analytic techniques, and suggest further connections with divisor sums and multiplicative functions.
\section{Preliminaries and Basic Properties}

In this section we recall some standard facts about Euler’s totient function and establish the basic framework for our study of restricted sums.

\subsection{Euler’s Totient Function}

\begin{definition}
	For $n \in \mathbb{N}$, Euler’s totient function is defined by
	\[
	\varphi(n) = \#\{\,1 \leq k \leq n : \gcd(k,n)=1\,\}.
	\]
\end{definition}

It is well known that $\varphi$ is multiplicative, i.e.,\cite{Narkiewicz}
\[
\gcd(m,n)=1 \;\;\Rightarrow\;\; \varphi(mn)=\varphi(m)\varphi(n).
\]
Moreover, if $n$ has prime factorization $n = \prod_{i=1}^{r} p_i^{\alpha_i}$, then
\begin{equation}\label{eq:phi-formula}
	\varphi(n) = n \prod_{i=1}^{r}\left(1-\frac{1}{p_i}\right).
\end{equation}

\subsection{Summatory Totient Function}

A central object in analytic number theory is the summatory function
\[
\Psi(x) = \sum_{n \leq x} \varphi(n).
\]

\begin{lemma}[Dirichlet, 19th century] \label{lem_asypto}
	For $x \geq 1$, the following asymptotic holds: \cite{Apostol} \cite{Davenport} 
	\[
	\Psi(x) = \frac{3}{\pi^2}x^2 + O(x \log x).
	\]
\end{lemma}

This formula shows that $\Psi(x)$ grows quadratically with $x$, with error controlled by $O(x \log x)$. It plays a key role in our later analysis.

\subsection{Restricted Totient Sums}

Fix a prime $p$. We introduce the following restricted versions of $\Psi(x)$:

\begin{definition}
	For $x \geq 1$ and a prime $p$, define
	\[
	\Upsilon(x,p) = \sum_{\substack{n \leq x \\ \gcd(n,p)=1}} \varphi(n), 
	\qquad
	\Delta(x,p) = \sum_{\substack{n \leq x \\ p \mid n}} \varphi(n).
	\]
\end{definition}

Clearly, the two functions satisfy the decomposition
\begin{equation}\label{eq:decomposition}
	\Psi(x) = \Upsilon(x,p) + \Delta(x,p).
\end{equation}

Thus $\Upsilon(x,p)$ measures the contribution of totients coprime to $p$, while $\Delta(x,p)$ isolates the contribution of totients supported on multiples of $p$.

\subsection{First Examples}

We illustrate these definitions with a small example. Let $x=10$ and $p=2$.
\[
\Psi(10) = \varphi(1)+\cdots+\varphi(10) = 32.
\]
The odd terms contribute
\[
\Upsilon(10,2) = \varphi(1)+\varphi(3)+\varphi(5)+\varphi(7)+\varphi(9) = 16,
\]
while the even terms contribute
\[
\Delta(10,2) = \varphi(2)+\varphi(4)+\varphi(6)+\varphi(8)+\varphi(10) = 16.
\]
As expected, the decomposition identity \eqref{eq:decomposition} holds: $\Psi(10)=\Upsilon(10,2)+\Delta(10,2)=32$.

\begin{lemma} \label{lem_sum_delta} Let $n$ a positive integer and $p\in\mathbb{P}$, and put $\Delta_{p}(n)=\sum \limits_{\underset{p|k}{k=1}}^n \phi(k)$ , Then we have :\\
	\begin{equation}
		\Delta_{p}(n)=(p-1)\sum_{\alpha=1}^{\big[\frac{ln(n)}{ln(p)}\big]} \Psi\bigg(\big[\frac{n}{p^\alpha}\big]\bigg)
	\end{equation}
\end{lemma}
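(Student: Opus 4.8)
The plan is to establish a one-step recurrence for $\Delta_p(n)$ in terms of $\Psi$ and $\Delta_p(\lfloor n/p\rfloor)$, and then unfold it. First I would reindex the sum by writing each multiple of $p$ as $k=pm$; as $k$ runs through the multiples of $p$ in $[1,n]$, the index $m$ runs through $1\le m\le \lfloor n/p\rfloor$, so
\[
\Delta_p(n)=\sum_{m=1}^{\lfloor n/p\rfloor}\varphi(pm).
\]

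The essential ingredient is how $\varphi$ transforms under multiplication by $p$, which follows directly from multiplicativity and \eqref{eq:phi-formula}: if $p\nmid m$ then $\varphi(pm)=(p-1)\varphi(m)$, whereas if $p\mid m$ then $\varphi(pm)=p\,\varphi(m)$. Substituting these and splitting the range $m\le\lfloor n/p\rfloor$ according to divisibility by $p$, I would write the coprime contribution as $(p-1)\big(\Psi(\lfloor n/p\rfloor)-\Delta_p(\lfloor n/p\rfloor)\big)$ and the divisible contribution as $p\,\Delta_p(\lfloor n/p\rfloor)$. The two $\Delta_p(\lfloor n/p\rfloor)$ terms then combine with coefficient $-(p-1)+p=1$, producing the clean recurrence
\[
\Delta_p(n)=(p-1)\,\Psi\!\left(\lfloor n/p\rfloor\right)+\Delta_p\!\left(\lfloor n/p\rfloor\right).
\]

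To conclude I would iterate this relation. Using the nested-floor identity $\big\lfloor \lfloor n/p\rfloor/p\big\rfloor=\lfloor n/p^2\rfloor$, and its $\alpha$-fold analogue giving $\lfloor n/p^\alpha\rfloor$ after $\alpha$ steps, repeated substitution yields
\[
\Delta_p(n)=(p-1)\sum_{\alpha=1}^{A}\Psi\!\left(\lfloor n/p^\alpha\rfloor\right)+\Delta_p\!\left(\lfloor n/p^A\rfloor\right),
\]
with $A=\big\lfloor \log n/\log p\big\rfloor$ the largest exponent satisfying $p^A\le n$. The remainder term vanishes, since $\lfloor n/p^A\rfloor<p$ forces the interval $[1,\lfloor n/p^A\rfloor]$ to contain no multiple of $p$, whence $\Delta_p(\lfloor n/p^A\rfloor)=0$; this leaves exactly the asserted identity. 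I expect the only delicate point to be the iteration bookkeeping — verifying the nested-floor identity and confirming that the recursion terminates precisely at $\alpha=A$ with a vanishing remainder — rather than anything analytic; the real heart of the argument is the two-case evaluation of $\varphi(pm)$ that drives the recurrence.
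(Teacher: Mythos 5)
Your argument is correct and complete: the two-case evaluation $\varphi(pm)=(p-1)\varphi(m)$ for $p\nmid m$ and $\varphi(pm)=p\,\varphi(m)$ for $p\mid m$ does yield the recurrence $\Delta_p(n)=(p-1)\Psi(\lfloor n/p\rfloor)+\Delta_p(\lfloor n/p\rfloor)$, the nested-floor identity justifies the unfolding, and the remainder vanishes at $\alpha=A$ since $\lfloor n/p^{A}\rfloor<p$. The paper itself gives no proof here --- it only cites the author's earlier work \cite{Ennaoui2020} --- so your self-contained derivation cannot be compared line by line, but it is a valid proof of the stated identity (one can sanity-check it at $n=10$, $p=2$: both sides equal $13$).
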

\begin{proof} (See : \cite{Ennaoui2020} )
	
\end{proof} 

\section{Congruence Properties of \(\Delta(x,p)\)}

In this section we prove several elementary but useful congruence and
arithmetic properties of the function
\(\Delta(x,p)=\sum_{\substack{n\le x\\ p\mid n}}\varphi(n)\) for a fixed prime
\(p\). The main result is that \(\Delta(x,p)\) is always divisible by \(p-1\).
We also record a simple finite-difference identity which will be used later.

\begin{proposition}\label{prop:diff-Delta}
	For every integer \(n\ge1\), We have :
	\[
	\Delta(n+1,p)-\Delta(n,p)
	=\begin{cases}
		\varphi(n+1), & p\mid(n+1),\\[4pt]
		0, & p\nmid(n+1).
	\end{cases}
	\]
\end{proposition}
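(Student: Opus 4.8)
The plan is to derive the identity directly from the definition of $\Delta(\cdot,p)$ as a summatory function, exploiting the elementary observation that raising the upper limit of summation from $n$ to $n+1$ enlarges the index set by at most a single integer. First I would write both quantities explicitly as
\[
\Delta(n+1,p)=\sum_{\substack{k\le n+1\\ p\mid k}}\varphi(k),
\qquad
\Delta(n,p)=\sum_{\substack{k\le n\\ p\mid k}}\varphi(k),
\]
and then subtract. Since the summands $\varphi(k)$ are identical in both expressions, the difference collapses to the contribution of those indices $k$ with $p\mid k$ lying in the half-open interval $(n,n+1]$.

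The key step is to note that, because $n$ is a positive integer, the only integer $k$ satisfying $n<k\le n+1$ is $k=n+1$ itself. Hence the difference $\Delta(n+1,p)-\Delta(n,p)$ equals $\varphi(n+1)$ precisely when $n+1$ satisfies the divisibility constraint $p\mid(n+1)$, and equals $0$ otherwise, since in that case no new index enters the restricted sum. This yields the two-case formula asserted in the proposition.

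There is essentially no analytic obstacle here; the only point requiring a moment of care is the bookkeeping of the index sets, ensuring that $n+1$ is included in $\Delta(n+1,p)$ but not in $\Delta(n,p)$, and that no other integer is gained or lost when the bound moves by one. I would present this transparently by writing $\Delta(n+1,p)$ as $\Delta(n,p)$ plus the single boundary term, so that the telescoping structure is immediate. This finite-difference identity is the discrete analogue of differentiating a partial sum, and I expect it to serve subsequently as the recursive engine behind the divisibility result $\Delta(x,p)\equiv 0 \pmod{p-1}$ and the generating-function identities announced in the introduction.
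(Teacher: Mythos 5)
Your proposal is correct and follows essentially the same route as the paper's proof: both subtract the two defining sums directly, cancel the common terms with $k\le n$, and observe that the only possible new index is $k=n+1$, contributing $\varphi(n+1)$ exactly when $p\mid(n+1)$. No further comment is needed.
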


\begin{proof}
	By definition \(\Delta(t,p)=\sum_{\substack{1\le k\le t\\ p\mid k}}\varphi(k)\).
	Therefore
	\[
	\Delta(n+1,p)-\Delta(n,p)
	=\sum_{\substack{1\le k\le n+1\\ p\mid k}}\varphi(k)
	-\sum_{\substack{1\le k\le n\\ p\mid k}}\varphi(k).
	\]
	All terms with \(k\le n\) cancel, so only the term \(k=n+1\) (if any) remains.
	If \(p\mid (n+1)\) then that remaining term equals \(\varphi(n+1)\); otherwise
	there is no remaining term and the difference is \(0\). This yields the stated
	case distinction.
\end{proof}

\begin{proposition}\label{lem:Delta_divides_pminus1}
	For any prime \(p\) and any integer \(n\ge 1\), We have : 
	\[
	\Delta(n,p)\equiv 0 \pmod{p-1}.
	\]
\end{proposition}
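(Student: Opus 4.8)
The plan is to observe that divisibility by $p-1$ already holds \emph{termwise}, before any summation. Every index $k$ contributing to $\Delta(n,p)$ satisfies $p\mid k$, so I would write $k=p^{\alpha}m$ with $\alpha\ge 1$ and $\gcd(m,p)=1$. By multiplicativity of $\varphi$ together with the prime-power formula \eqref{eq:phi-formula}, this gives
\[
\varphi(k)=\varphi(p^{\alpha})\varphi(m)=p^{\alpha-1}(p-1)\,\varphi(m),
\]
which is manifestly a multiple of $p-1$. Hence each summand $\varphi(k)$ in the definition of $\Delta(n,p)$ is divisible by $p-1$, and since a finite sum of multiples of $p-1$ is again a multiple of $p-1$, the claim follows at once.

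A second, equally short route uses Lemma \ref{lem_sum_delta} directly. That lemma records the closed form
\[
\Delta(n,p)=(p-1)\sum_{\alpha=1}^{\lfloor \log n/\log p\rfloor}\Psi\!\left(\left\lfloor \tfrac{n}{p^{\alpha}}\right\rfloor\right),
\]
in which the factor $p-1$ has been pulled out explicitly. Since each $\Psi\!\big(\lfloor n/p^{\alpha}\rfloor\big)$ is a sum of totient values and therefore a nonnegative integer, the bracketed sum is an integer, and $\Delta(n,p)$ is exactly $p-1$ times an integer. I would present the termwise argument as the main proof and mention this identity as an immediate confirmation, since it makes the divisibility structurally transparent.

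There is essentially no analytic obstacle here; the only point requiring care is the bookkeeping of the factorization $k=p^{\alpha}m$, namely verifying that $\alpha\ge 1$ whenever $p\mid k$ and that $\gcd(p^{\alpha},m)=1$ so that multiplicativity genuinely applies. Once this is in place the result is purely a statement about the arithmetic of $\varphi$ on prime powers, and the conclusion $\Delta(n,p)\equiv 0\pmod{p-1}$ is immediate. The main conceptual content is simply recognizing that $p-1=\varphi(p)$ divides $\varphi(p^{\alpha})$, and hence divides $\varphi(k)$ for every multiple $k$ of $p$.
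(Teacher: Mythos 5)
Your termwise argument is exactly the paper's proof: it writes each summand as $\varphi(pm)=p^{a}(p-1)\varphi(r)$ via the factorization $m=p^{a}r$ with $\gcd(r,p)=1$, which is the same decomposition as your $k=p^{\alpha}m$ with $\alpha\ge1$, and concludes that every term is a multiple of $p-1$. The alternative route you sketch through Lemma \ref{lem_sum_delta} is also valid and makes the divisibility visible in closed form, but the main proof you propose coincides with the one in the paper.
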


\begin{proof}
	Write the definition
	\[
	\Delta(n,p)=\sum_{m=1}^{\lfloor n/p\rfloor}\varphi(pm).
	\]
	Fix \(m\ge1\) and factor \(m=p^a r\) with \(a\ge0\) and \(\gcd(r,p)=1\).
	Then \(pm=p^{a+1}r\) and, by multiplicativity of \(\varphi\) on coprime integers,
	\[
	\varphi(pm)=\varphi\bigl(p^{a+1}\bigr)\varphi(r).
	\]
	For a prime power we have \(\varphi(p^{a+1})=p^{a}(p-1)\), hence
	\[
	\varphi(pm)=p^{a}(p-1)\varphi(r),
	\]
	which is divisible by \(p-1\). Because every term \(\varphi(pm)\) in the sum
	defining \(\Delta(n,p)\) is divisible by \(p-1\), the whole sum is divisible
	by \(p-1\). Thus \(\Delta(n,p)\equiv 0\pmod{p-1}\).
\end{proof}

\section{Connections of $\Delta(n,p)$ with Other arithmetic Functions}

In this section we record several additional identities satisfied by the
restricted summatory function $\Delta(n,p)$. These identities connect $\Delta$
with arithmetic functions such as $\omega(n)$, the number of distinct prime
divisors, and reveal combinatorial and generating function structures.

\begin{proposition}\label{lem:sumDelta_omega}
	Let \(n\ge 1\) be an integer and for each prime \(p\). we have :
	\[
	\sum_{p}\Delta(n,p)=\sum_{k=1}^n \varphi(k)\,\omega(k),
	\]
	where \(\omega(k)\) denotes the number of distinct prime divisors of \(k\).
\end{proposition}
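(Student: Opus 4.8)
The plan is to prove the identity by a double-counting (Fubini) argument on a finite double sum. First I would observe that although the left-hand side is written as a sum over all primes $p$, it is effectively a finite sum: if $p>n$ then no multiple of $p$ lies in $\{1,\dots,n\}$, so $\Delta(n,p)=0$, and only the primes $p\le n$ contribute. This reduces every manipulation below to a rearrangement of finitely many terms, so no convergence or interchange issues can arise.

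Next I would expand each summand by its definition and record the result as a sum over pairs,
\[
\sum_{p}\Delta(n,p)
=\sum_{p\le n}\;\sum_{\substack{1\le k\le n\\ p\mid k}}\varphi(k),
\]
viewing the right-hand side as a sum over the set of pairs $(p,k)$ with $p$ prime, $k\le n$, and $p\mid k$. The key step is to interchange the order of summation, summing first over $k$ and then over the primes dividing that fixed $k$:
\[
\sum_{p}\Delta(n,p)
=\sum_{k=1}^{n}\varphi(k)\sum_{\substack{p\text{ prime}\\ p\mid k}}1.
\]

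Finally I would evaluate the inner sum: for a fixed $k$ it counts precisely the distinct prime divisors of $k$, which by definition equals $\omega(k)$. Substituting $\sum_{p\mid k}1=\omega(k)$ yields the claimed identity $\sum_{p}\Delta(n,p)=\sum_{k=1}^{n}\varphi(k)\,\omega(k)$. I do not anticipate a genuine obstacle here; the argument is pure rearrangement, and the only point requiring a word of justification is the finiteness remark (equivalently, that $\Delta(n,p)$ vanishes for every prime $p>n$), which is what legitimizes swapping the order of summation.
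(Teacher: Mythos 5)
Your proposal is correct and follows essentially the same route as the paper: expand $\Delta(n,p)$ by definition, interchange the (finite) double sum, and recognize the inner sum over primes dividing $k$ as $\omega(k)$. The paper makes the same finiteness observation you do, phrased as the equivalence of the conditions $\{p\le n,\ p\mid k\}$ and $\{p\mid k\}$ for fixed $k\le n$.
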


\begin{proof}
	Start from the definition and sum over primes \(p\le n\):
	\[
	\sum_{p\le n}\Delta(n,p)
	=\sum_{p\le n}\sum_{\substack{1\le k\le n\\ p\mid k}}\varphi(k).
	\]
	Both sums are finite, so we may interchange the order of summation:
	\[
	\sum_{p\le n}\sum_{\substack{1\le k\le n\\ p\mid k}}\varphi(k)
	=\sum_{k=1}^{n}\sum_{\substack{p\le n\\ p\mid k}}\varphi(k).
	\]
	Fix an integer \(k\) with \(1\le k\le n\). For such a fixed \(k\) the inner condition
	\(\{\,p\le n\text{ prime}:\ p\mid k\,\}\) is equivalent to
	\(\{\,p\text{ prime}:\ p\mid k\,\}\) because any prime divisor \(p\) of \(k\)
	satisfies \(p\le k\le n\). Hence
	\[
	\sum_{\substack{p\le n\\ p\mid k}}\varphi(k)
	=\sum_{\substack{p\ \text{prime}\\ p\mid k}}\varphi(k).
	\]
	The value \(\varphi(k)\) does not depend on \(p\), so the inner sum is
	\(\varphi(k)\) multiplied by the number of distinct prime divisors of \(k\),
	which is \(\omega(k)\). Therefore
	\[
	\sum_{\substack{p\le n\\ p\mid k}}\varphi(k)=\varphi(k)\,\omega(k).
	\]
	Substituting back into the outer sum yields
	\[
	\sum_{p\le n}\Delta(n,p)
	=\sum_{k=1}^{n}\varphi(k)\,\omega(k),
	\]
	as claimed.
\end{proof}

\begin{proposition}
	Let $p$ be a prime and $n \geq 1$ , Then :
	\[
	\sum_{k \leq n} \gcd(k,p)\,\varphi(k) \;=\; \Psi(n) + (p-1)\,\Delta(n,p).
	\]
\end{proposition}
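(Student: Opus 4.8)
The plan is to evaluate the left-hand sum by splitting according to the value of $\gcd(k,p)$. Since $p$ is prime, for any $k$ we have $\gcd(k,p)\in\{1,p\}$: it equals $p$ exactly when $p\mid k$, and equals $1$ otherwise. This dichotomy is the whole engine of the proof.

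First I would write
\[
\sum_{k\le n}\gcd(k,p)\,\varphi(k)
=\sum_{\substack{k\le n\\ p\nmid k}}1\cdot\varphi(k)
+\sum_{\substack{k\le n\\ p\mid k}}p\cdot\varphi(k),
\]
using the two possible gcd-values as the coefficients in each piece. The first sum is exactly $\Upsilon(n,p)$ and the second is $p\,\Delta(n,p)$, so the expression becomes $\Upsilon(n,p)+p\,\Delta(n,p)$.

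Next I would invoke the decomposition identity \eqref{eq:decomposition}, namely $\Psi(n)=\Upsilon(n,p)+\Delta(n,p)$, to substitute $\Upsilon(n,p)=\Psi(n)-\Delta(n,p)$. This gives
\[
\Upsilon(n,p)+p\,\Delta(n,p)
=\Psi(n)-\Delta(n,p)+p\,\Delta(n,p)
=\Psi(n)+(p-1)\,\Delta(n,p),
\]
which is precisely the claimed identity.

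This argument is entirely elementary, so there is no genuine obstacle; the only point that requires a word of justification is the claim that $\gcd(k,p)$ takes only the values $1$ and $p$ — this is immediate from the primality of $p$, since the divisors of $p$ are just $1$ and $p$. The rest is bookkeeping: correctly matching the two restricted sums to $\Upsilon$ and $\Delta$ and then applying the decomposition. I would keep the exposition short, emphasizing the gcd-dichotomy as the key step and treating the substitution via \eqref{eq:decomposition} as the natural closing move.
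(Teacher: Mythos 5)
Your proposal is correct and follows essentially the same route as the paper: both split the sum according to whether $p\mid k$ (using the gcd-dichotomy $\gcd(k,p)\in\{1,p\}$) and then rewrite the coprime part as $\Psi(n)-\Delta(n,p)$ via the decomposition \eqref{eq:decomposition}. No gaps.
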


\begin{proof}
	Set
	\[
	S = \sum_{k \leq n} \gcd(k,p)\,\varphi(k).
	\]
	Partition the integers $1 \leq k \leq n$ into two sets:
	\[
	A = \{ k \leq n : p \nmid k \}, \qquad
	B = \{ k \leq n : p \mid k \}.
	\]
	If $k \in A$, then $\gcd(k,p) = 1$;  
	if $k \in B$, then $\gcd(k,p) = p$.  
	Hence
	\[
	S = \sum_{k \in A} 1 \cdot \varphi(k) \;+\; \sum_{k \in B} p \cdot \varphi(k).
	\]
	
	Now
	\[
	\sum_{k \in A} \varphi(k) = \Psi(n) - \Delta(n,p),
	\qquad
	\sum_{k \in B} \varphi(k) = \Delta(n,p).
	\]
	Thus
	\[
	S = \Psi(n) - \Delta(n,p) + p \Delta(n,p)
	= \Psi(n) + (p-1)\Delta(n,p).
	\]
	This proves the lemma.
\end{proof}
\begin{proposition}\label{prop:ogf-Delta}
	Let \(N\ge1\) be an integer and \(x\) a complex number with \(x\neq1\).
	Then :
	\[
	\boxed{\;
		\sum_{n=1}^N \Delta(n,p)\,x^n
		=\sum_{m=1}^{\lfloor N/p\rfloor}\varphi(pm)\,\frac{x^{pm}-x^{N+1}}{1-x}\; }.
	\]
	Moreover, if \(x=1\) the same identity holds in the limiting (summation) sense:
	\[
	\sum_{n=1}^N \Delta(n,p)
	=\sum_{m=1}^{\lfloor N/p\rfloor}\varphi(pm)\,(N-pm+1).
	\]
\end{proposition}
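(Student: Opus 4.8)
The plan is to expand $\Delta(n,p)$ using its definition as a sum over multiples of $p$, substitute this into the left-hand side, and then interchange the order of the resulting double summation. First I would recall that, directly from the definition (and as already used in the proof of Proposition~\ref{lem:Delta_divides_pminus1}),
\[
\Delta(n,p)=\sum_{\substack{1\le k\le n\\ p\mid k}}\varphi(k)=\sum_{m=1}^{\lfloor n/p\rfloor}\varphi(pm).
\]
Substituting this expression into the generating sum gives
\[
\sum_{n=1}^N \Delta(n,p)\,x^n
=\sum_{n=1}^N\left(\sum_{m=1}^{\lfloor n/p\rfloor}\varphi(pm)\right)x^n
=\sum_{n=1}^N\sum_{\substack{m\ge1\\ pm\le n}}\varphi(pm)\,x^n.
\]

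The key step is to swap the two summations. For each fixed $m$ in the range $1\le m\le\lfloor N/p\rfloor$, the outer index $n$ runs over all integers with $pm\le n\le N$, so after interchange the double sum becomes
\[
\sum_{m=1}^{\lfloor N/p\rfloor}\varphi(pm)\sum_{n=pm}^{N}x^n.
\]
The inner sum is a finite geometric series. For $x\neq1$ it evaluates to $\dfrac{x^{pm}-x^{N+1}}{1-x}$, which yields the boxed identity; for $x=1$ it is simply the number of terms $N-pm+1$, giving the second formula either by direct counting or by taking the limit $x\to1$.

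The main obstacle, modest as it is, lies in correctly identifying the range of summation after the interchange: one must check that the condition $pm\le n$ together with $1\le n\le N$ is exactly equivalent to the pair of constraints $1\le m\le\lfloor N/p\rfloor$ and $pm\le n\le N$. Since every term $\varphi(pm)$ contributing to some $\Delta(n,p)$ with $n\le N$ must satisfy $pm\le N$, the upper bound $\lfloor N/p\rfloor$ on $m$ is forced and no boundary terms are dropped. Once this equivalence is verified, the remainder is the routine evaluation of a geometric sum, so I do not anticipate any serious difficulty.
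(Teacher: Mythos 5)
Your proposal is correct and follows essentially the same route as the paper's proof: expand $\Delta(n,p)=\sum_{m=1}^{\lfloor n/p\rfloor}\varphi(pm)$, interchange the two finite sums, and evaluate the inner geometric series $\sum_{n=pm}^{N}x^n$ for $x\neq1$ and by direct counting for $x=1$. Your extra check of the summation range after the interchange is the same bookkeeping the paper handles via the indicator $\mathbf{1}_{n\ge pm}$, so there is nothing to add.
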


\begin{proof}
	Start from the definition \(\Delta(n,p)=\sum_{m:\,pm\le n}\varphi(pm)\).
	Then
	\[
	\sum_{n=1}^N \Delta(n,p)\,x^n
	=\sum_{n=1}^N \Bigg(\sum_{\substack{m\ge1\\ pm\le n}}\varphi(pm)\Bigg)x^n.
	\]
	Both sums are finite, so interchange the order of summation:
	\[
	\sum_{n=1}^N \Delta(n,p)\,x^n
	=\sum_{m=1}^{\lfloor N/p\rfloor}\varphi(pm)\sum_{n=1}^N \mathbf{1}_{n\ge pm}\,x^n
	=\sum_{m=1}^{\lfloor N/p\rfloor}\varphi(pm)\sum_{n=pm}^N x^n.
	\]
	The inner sum is a finite geometric progression. For \(x\neq1\),
	\[
	\sum_{n=pm}^N x^n = x^{pm}+x^{pm+1}+\cdots+x^N = \frac{x^{pm}-x^{N+1}}{1-x},
	\]
	which yields the identity stated. If \(x=1\) the inner sum equals \(N-pm+1\),
	giving the second displayed formula.
\end{proof}

\section{Asymptotic Expansions}

In this section we derive precise asymptotic formulas for $\Delta(x,p)$ and
related summatory quantities. The main input is the classical asymptotic for
the summatory totient function
\[
\Psi(x)=\sum_{n\le x}\varphi(n)=\frac{3}{\pi^2}x^2+O(x\log x),
\]
which we use to unfold an exact recurrence for $\Delta(x,p)$ and to estimate
error terms uniformly.	
\begin{theorem}\label{the_princ}
	Let $p$ be a prime and $n \geq 1$. 
	Then :
	\[
	\Delta(x,p) = \frac{3}{\pi^{2}(p+1)}\,x^{2} \;+\; O\!\big(x \log x\big),
	\]
	where the implied constant is absolute.
\end{theorem}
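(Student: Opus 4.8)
The plan is to use Lemma~\ref{lem_sum_delta}, which already expresses $\Delta(x,p)$ exactly as a sum of evaluations of $\Psi$ at the points $[x/p^\alpha]$:
\[
\Delta(x,p)=(p-1)\sum_{\alpha=1}^{\lfloor \log x/\log p\rfloor}\Psi\!\left(\Big[\tfrac{x}{p^\alpha}\Big]\right).
\]
Into each term I substitute the classical asymptotic $\Psi(t)=\tfrac{3}{\pi^2}t^2+O(t\log t)$ from Lemma~\ref{lem_asypto}. First I would replace $[x/p^\alpha]$ by $x/p^\alpha$ at the cost of a controlled error: since $\Psi(t+O(1))-\Psi(t)=O(t)$ (each totient value is at most its argument, so removing the floor changes the sum by $O(x/p^\alpha)$), the floor functions contribute only to the error term. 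After this reduction the main term becomes
\[
(p-1)\cdot\frac{3}{\pi^2}\,x^2\sum_{\alpha=1}^{\infty}\frac{1}{p^{2\alpha}},
\]
and I would extend the finite $\alpha$-sum to the full geometric series, again absorbing the tail into the error.

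The key computation is the geometric series $\sum_{\alpha\ge1}p^{-2\alpha}=\dfrac{1}{p^2-1}$, so the main term collapses to
\[
(p-1)\cdot\frac{3}{\pi^2}\,x^2\cdot\frac{1}{p^2-1}
=\frac{3}{\pi^2}\,x^2\cdot\frac{p-1}{(p-1)(p+1)}
=\frac{3}{\pi^{2}(p+1)}\,x^2,
\]
which is exactly the claimed leading coefficient. This cancellation of the factor $(p-1)$ against $p^2-1=(p-1)(p+1)$ is the clean algebraic heart of the result and explains the appearance of $p+1$ in the denominator.

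The real work is bookkeeping on the error term, and this is where I expect the main obstacle to lie. Summing the error contributions over $\alpha$, the dominant piece comes from $\alpha=1$, where $\Psi(x/p)$ carries an error $O\!\big((x/p)\log(x/p)\big)=O(x\log x)$; the factor $(p-1)$ out front multiplies this but $p$ is fixed, so it stays $O(x\log x)$ with the implied constant depending on $p$. The subtler point is the claim that the implied constant is \emph{absolute}, i.e. uniform in $p$. To secure uniformity I would track the $p$-dependence explicitly: the floor-removal errors sum to $O\!\big(x\sum_\alpha p^{-\alpha}\big)=O\!\big(x/(p-1)\big)$, the geometric-tail truncation is negligible, and the $(p-1)$ prefactor times the per-term error $O\!\big((x/p^\alpha)\log x\big)$ sums to $O\!\big((p-1)x\log x\sum_\alpha p^{-\alpha}\big)=O(x\log x)$, where the $(p-1)$ and $\sum_\alpha p^{-\alpha}\le 1/(p-1)$ neatly offset each other. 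Assembling these three bounds gives a total error $O(x\log x)$ whose implied constant does not grow with $p$, establishing the theorem.
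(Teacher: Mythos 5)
Your proposal is correct and follows essentially the same route as the paper: unfold $\Delta$ via Lemma~\ref{lem_sum_delta}, insert the Dirichlet asymptotic for $\Psi$, sum the geometric series $\sum_\alpha p^{-2\alpha}=1/(p^2-1)$ so that $(p-1)$ cancels to give $1/(p+1)$, and absorb the floor-removal and per-term errors into $O(x\log x)$. Your explicit tracking of the $p$-dependence (the $(p-1)$ prefactor offsetting $\sum_\alpha p^{-\alpha}\le 1/(p-1)$) is in fact slightly more careful than the paper's own treatment of the uniformity claim.
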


\begin{proof}
	By the lemma \eqref{lem_sum_delta} , We Have :
	\[
	\Delta(n,p) = (p-1) \sum_{j \geq 1} \Psi\!\left(\Big\lfloor \frac{n}{p^{\,j}} \Big\rfloor\right).
	\]
	
	Next, recall the well-known asymptotic \eqref{lem_asypto} :
	\[
	\Psi(x) = \frac{3}{\pi^{2}}x^{2} + O(x \log x).
	\]
	Substituting into the exact sum gives
	\[
	\Delta(n,p) 
	= (p-1) \sum_{j \geq 1} \left( \frac{3}{\pi^{2}} 
	\Big\lfloor \frac{n}{p^{\,j}} \Big\rfloor^{2} 
	+ O\!\left(\frac{n}{p^{j}} \log \frac{n}{p^{j}} \right) \right).
	\]
	
	For the main term, since 
	\(\lfloor n/p^{j} \rfloor^{2} = \frac{n^{2}}{p^{2j}} + O\!\big(n/p^{j}\big)\),
	\[
	\frac{3}{\pi^{2}} (p-1) \sum_{j \geq 1} \frac{n^{2}}{p^{2j}}
	= \frac{3}{\pi^{2}} \cdot \frac{p-1}{p^{2}-1} \, n^{2}
	= \frac{3}{\pi^{2}(p+1)} \, n^{2}.
	\]
	
	For the error term, we estimate
	\[
	(p-1) \sum_{j \geq 1} O\!\left(\frac{n}{p^{j}} \log \frac{n}{p^{j}} \right)
	= O\!\big(n \log n \sum_{j \geq 1} p^{-j}\big)
	= O(n \log n).
	\]
	
	Combining both parts, we obtain
	\[
	\Delta(n,p) = \frac{3}{\pi^{2}(p+1)} \, n^{2} + O(n \log n).
	\]
\end{proof}

\begin{theorem}\label{thm:sum-and-average}
	Let $p$ be a prime and $n\ge1$. Then :
	\begin{align}
		\sum_{k\le n}\Delta(k,p) &= \frac{1}{\pi^{2}(p+1)}\,n^{3} \;+\; O\!\big(n^{2}\log n\big), \label{eq:sumDelta} \\
		\frac{1}{n}\sum_{k\le n}\Delta(k,p) &= \frac{1}{\pi^{2}(p+1)}\,n^{2} \;+\; O\!\big(n\log n\big). \label{eq:avgDelta}
	\end{align}
\end{theorem}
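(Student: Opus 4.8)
The plan is to obtain both formulas by summing the pointwise asymptotic of Theorem~\ref{the_princ} over $k\le n$. That theorem gives, with an absolute implied constant, $\Delta(k,p)=\frac{3}{\pi^{2}(p+1)}k^{2}+O(k\log k)$ for every $k\ge1$. Summing this identity over $1\le k\le n$ and using linearity splits the left-hand side of \eqref{eq:sumDelta} into a main term $\frac{3}{\pi^{2}(p+1)}\sum_{k\le n}k^{2}$ and an accumulated error $\sum_{k\le n}O(k\log k)$. Both pieces are then evaluated separately.

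For the main term I would substitute the closed form $\sum_{k=1}^{n}k^{2}=\frac{n(n+1)(2n+1)}{6}=\frac{n^{3}}{3}+\frac{n^{2}}{2}+\frac{n}{6}$, so that $\frac{3}{\pi^{2}(p+1)}\sum_{k\le n}k^{2}=\frac{1}{\pi^{2}(p+1)}n^{3}+O(n^{2})$, the $O(n^{2})$ arising from the two lower-order terms. For the error term I would bound $\sum_{k\le n}k\log k$ crudely: each summand satisfies $k\log k\le n\log n$ and there are $n$ of them, giving $\sum_{k\le n}k\log k=O(n^{2}\log n)$ (equivalently one may compare with $\int_{1}^{n}t\log t\,dt\asymp\tfrac12 n^{2}\log n$). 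Since the $O(n^{2})$ from the main-term expansion is dominated by $O(n^{2}\log n)$, both residual contributions merge into the single error $O(n^{2}\log n)$, yielding \eqref{eq:sumDelta}.

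The average formula \eqref{eq:avgDelta} then follows immediately by dividing \eqref{eq:sumDelta} through by $n$: the main term scales to $\frac{1}{\pi^{2}(p+1)}n^{2}$ and the error term to $O(n\log n)$, which is exactly the claimed expansion.

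The argument presents no genuine obstacle, being essentially a termwise summation of an already-established pointwise estimate. The only points requiring care are that the implied constant in Theorem~\ref{the_princ} is uniform in $k$ (as asserted there), so that the individual error bounds may legitimately be added, and that the polynomial lower-order terms produced by $\sum_{k\le n}k^{2}$ are verified to lie strictly below the $n^{2}\log n$ threshold so as to be absorbed cleanly into the final error.
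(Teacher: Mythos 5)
Your proposal is correct and follows essentially the same route as the paper's own proof: both sum the pointwise asymptotic of Theorem~\ref{the_princ} termwise, evaluate $\sum_{k\le n}k^{2}=\tfrac{n^{3}}{3}+O(n^{2})$ for the main term, bound $\sum_{k\le n}k\log k=O(n^{2}\log n)$ for the accumulated error, and divide by $n$ for the average. The only cosmetic difference is that the paper estimates $\sum_{k\le n}k\log k$ by comparison with $\int_{1}^{n}t\log t\,dt$ while you also offer the cruder bound $n\cdot n\log n$; both suffice.
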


\begin{proof}
	We start from Theorem \ref{the_princ}. Put :
	\[
	A:=\frac{3}{\pi^{2}(p+1)}.
	\]
	Then for every integer $k\ge1$ we have, uniformly,
	\[
	\Delta(k,p) = A k^{2} + R(k),\qquad R(k)=O(k\log k).
	\]
	(The uniformity of the $O$–term follows because Theorem \ref{the_princ} holds for every real argument $x$, with an absolute implied constant.)
	
	\medskip\noindent\textbf{Step 1 — Sum the main term.}
	\[
	\sum_{k=1}^{n} A k^{2} = A\sum_{k=1}^{n} k^{2}
	= A\left(\frac{n^{3}}{3} + O(n^{2})\right)
	= \frac{A}{3}\,n^{3} + O(n^{2}).
	\]
	(We used $\sum_{k=1}^{n} k^{2} = n^{3}/3 + O(n^{2})$.)
	
	\medskip\noindent\textbf{Step 2 — Sum the error terms.}
	Using the bound $R(k)\ll k\log k$,
	\[
	\sum_{k=1}^{n} R(k) \ll \sum_{k=1}^{n} k\log k.
	\]
	Estimate the last sum by an integral (or by partial summation):
	\[
	\sum_{k=1}^{n} k\log k
	= \int_{1}^{n} t\log t\,dt + O(n\log n)
	= \frac{1}{2}n^{2}\log n - \frac{1}{4}n^{2} + O(n\log n)
	= O(n^{2}\log n).
	\]
	Hence
	\[
	\sum_{k=1}^{n} R(k) = O(n^{2}\log n).
	\]
	
	\medskip\noindent\textbf{Step 3 — Combine.}
	Adding the contributions from Steps 1 and 2,
	\[
	\sum_{k=1}^{n}\Delta(k,p)
	= \frac{A}{3}\,n^{3} + O(n^{2}) + O(n^{2}\log n)
	= \frac{A}{3}\,n^{3} + O(n^{2}\log n).
	\]
	Since $A/3 = \dfrac{1}{\pi^{2}(p+1)}$, this yields \eqref{eq:sumDelta}:
	\[
	\sum_{k\le n}\Delta(k,p) = \frac{1}{\pi^{2}(p+1)}\,n^{3} + O(n^{2}\log n).
	\]
	
	\medskip\noindent\textbf{Step 4 — Average.}
	Divide \eqref{eq:sumDelta} by $n$ to obtain the average:
	\[
	\frac{1}{n}\sum_{k\le n}\Delta(k,p)
	= \frac{1}{\pi^{2}(p+1)}\,n^{2} + O(n\log n),
	\]
	which is \eqref{eq:avgDelta}. This completes the proof.
\end{proof}

\section*{Conclusion}

In this work, we have further investigated the arithmetic function
$\Delta(n,p)$, which sums Euler's totient function over multiples of a
prime $p$. We have established several new identities and properties
of $\Delta(n,p)$, including:

\begin{itemize}
	\item Congruence properties and divisibility by $p-1$.
	\item Connections with multiplicative functions \cite{Narkiewicz}, such as
	$\omega(n)$, the number of distinct prime divisors.
	\item Explicit finite differences and generating function representations.
	\item Asymptotic expansions for $\Delta(n,p)$ and its cumulative sums.
\end{itemize}

These results provide a deeper understanding of the structure of
$\Delta(n,p)$ and its interactions with classical arithmetic functions.
Moreover, they offer a foundation for further investigations, including
studies of more general multiplicative summatory functions and their
asymptotic behavior.

Future work may extend these methods to analogous functions involving
other arithmetic functions such as the divisor function $\tau(n)$ or
the sum-of-divisors function $\sigma(n)$, and explore potential
applications in analytic number theory and prime number distribution.

\end{document}